\documentclass[12pt,a4paper]{amsart}
\usepackage[a4paper, footskip=0.5in,  headheight = 0.5in, top=1.25in, bottom=1.25in,  right=1in,  left=1in]{geometry}
\usepackage{macros}

\title[Equitable Coloring of Large Bipartite Graphs]{Equitable Coloring of Large Bipartite Graphs}

\author{Amir Nikabadi}
\address{IT University of Copenhagen, Denmark}
\thanks{Supported by the Independent Research Fund Denmark (DFF), grant agreement number 2098-00012B}
\date{\today}

\begin{document}

\maketitle

\begin{abstract}
For a graph $G$, the \emph{equitable chromatic number} of $G$, denoted by $\chi_e(G)$, is the smallest integer $k$ such that $G$ admits a proper $k$-coloring whose color classes differ in size by at most one. We prove that for every $\zeta>41/2$, there exists a constant $c=c(\zeta)\in\mathbb{N}$ such that every bipartite graph $G$ with maximum degree $\Delta(G)\ge c$ and $|V(G)|\ge \zeta\Delta(G)$ satisfies
$\chi_e(G)\le \left\lceil\Delta(G)/2\right\rceil+1$. 
For general bipartite graphs, the star $K_{1,\Delta}$ witnesses that no upper bound
depending only on $\Delta(G)$ can have leading term smaller than
$\Delta(G)/2$.
Our proof yields an $O(|V(G)|^2)$-time algorithm for constructing such a coloring.
\end{abstract}

\section{Introduction}
We use $\mathbb{N}$ to denote the set of positive integers.
We let $[n] := \{1, \dots , n \}$ for every $n \in \mathbb{N}$.
Graphs in this paper have finite vertex sets, no loops, and no parallel edges. Given a graph $G=(V(G),E(G))$, the maximum degree of $G$ is denoted by $\Delta(G)$.
An \emph{independent set} in $G$ is a set of pairwise non-adjacent vertices.
For disjoint $X, Y \subseteq V(G)$, we say that $X$ is \emph{complete} to $Y$ if every vertex in $X$ is adjacent to every vertex in $Y$, and $X$ is \emph{anticomplete} to $Y$ if there
are no edges between $X$ and $Y$.
A proper $k$-coloring of $G$ is \emph{equitable} if the sizes of any two color classes differ by at most one. The smallest integer $k$ such that $G$ admits an equitable $k$-coloring is called the \emph{equitable chromatic number} of $G$, and is denoted by $\chi_e(G)$. Clearly, for every graph $G$, we have $\chi(G)\le \chi_e(G)$.

Equitable coloring has a long history.
A celebrated result of Hajnal and Szemerédi~\cite{hajnal1970proof} settled a conjecture of Erdős~\cite{erdos1964problem} by showing that every graph $G$ with maximum degree at most $\Delta$ admits an equitable $(\Delta+1)$-coloring. The bound in the Hajnal--Szemerédi theorem is sharp for complete graphs $K_{\Delta+1}$ and for complete bipartite graphs $K_{\Delta,\Delta}$ when $\Delta$ is odd.

In the ordinary coloring setting, Brooks' theorem gives a substantial strengthening of the trivial bound $\chi(G)\le \Delta(G)+1$, showing that every connected graph $G$ satisfies $\chi(G)\le \Delta(G)$ except for the usual complete graph and odd cycle obstructions. The corresponding equitable analogue remains open in general. In particular, the following strengthening of the Hajnal--Szemerédi theorem was conjectured by Chen, Lih, and Wu~\cite{chen1994equitable}.

\begin{conjecture}[Chen, Lih, and Wu~\cite{chen1994equitable}]\label{conj:eq-D}
Every connected graph $G$ with maximum degree $\Delta\ge 2$ has an equitable coloring with $\Delta$ colors, except when $G$ is a complete graph, or an odd cycle, or $\Delta$ is odd and $G=K_{\Delta,\Delta}$.
\end{conjecture}

Conjecture~\ref{conj:eq-D} remains open in general. Chen, Lih, and Wu~\cite{chen1994equitable} proved it for graphs with $\Delta(G)\le 3$ and for graphs with $\Delta(G)\ge |V(G)|/2$. For trees, it was proved by Chen and Lih~\cite{chen1994-treeequitable}. Lih and Wu~\cite{lih1996equitable} proved it for connected bipartite graphs.

The bipartite setting, however, exhibits extremal behavior that differs substantially from the general $\Delta$-coloring phenomenon. In particular, the star $K_{1,\Delta}$ satisfies $\chi_e(K_{1,\Delta})=\lceil \Delta/2\rceil+1$, so any upper bound for bipartite graphs must, in general, have leading term at least $\Delta/2$. It is therefore natural to ask whether every bipartite graph satisfies an upper bound of the form $\chi_e(G)\le \lceil \Delta(G)/2\rceil+O(1)$.

Let us mention two results in this vein.
Bollobás and Guy~\cite{bollobas1983equitable} proved that a tree $T$ is
equitably 3-colorable if $|T| \geq 3\Delta(T)-8$ or $|T| = 3\Delta(T)-10$ and provided an algorithm for
producing the coloring. This result was extended to all $k \geq 2$ and to all forests by Chen and Lih~\cite{chen1994-treeequitable}.
In a different regime, a related result of Lih and Wu~\cite{lih1996equitable} shows that if $G=G(A,B)$ is a connected bipartite graph with $|A|=a\ge b=|B|$ and
$|E(G)|<\left\lfloor \frac{a}{b+1}\right\rfloor (a-b)+2a,$
then
$\chi_e(G)\le \left\lceil \frac{a}{b+1}\right\rceil+1.$
Thus their result is governed by the imbalance between the two sides of the bipartition together with an explicit sparsity condition on the number of edges, rather than by $\Delta(G)$ alone.

As far as we know (and curiously enough), no upper bound of the form
$\chi_e(G)\le \left\lceil \frac{\Delta(G)}{2}\right\rceil+O(1)$, stated solely in terms of $\Delta(G)$, has been established for bipartite graphs. The purpose of this paper is to prove such a bound for bipartite graphs whose order is sufficiently large compared to their maximum degree. In particular, we prove the following. 

\begin{theorem}\label{thm:main}
For every $\zeta>\frac{41}{2}$, there exists $c_{\ref{thm:main}}=c_{\ref{thm:main}}(\zeta)\in\mathbb{N}$ such that every bipartite graph $G$ with $\Delta(G)\ge c_{\ref{thm:main}}$ and $|V(G)|\ge \zeta\Delta(G)$ satisfies
$
\chi_e(G)\le \left\lceil \Delta(G)/2\right\rceil+1.
$
\end{theorem}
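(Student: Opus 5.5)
Let $\Delta=\Delta(G)$, $k=\lceil\Delta/2\rceil+1$, $n=|V(G)|\ge\zeta\Delta$, and fix a bipartition $(A,B)$ with $|A|\ge|B|$. An equitable $k$-coloring is a partition of $V(G)$ into $k$ independent sets, each of size $\lfloor n/k\rfloor$ or $\lceil n/k\rceil$. Since $k\approx\Delta/2$ and $n\ge\zeta\Delta$, every class has size $s\approx 2n/\Delta\ge 2\zeta>41$. So the classes are large compared with a constant but tiny compared with $\Delta$.

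The plan has three steps.

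\textbf{Step 1: rough coloring.} Build a first coloring in which each class lies entirely in $A$ or entirely in $B$. Allocate $k_A$ classes to $A$ and $k_B=k-k_A$ classes to $B$, with $k_A s\approx|A|$ and $k_B s\approx|B|$. Because of rounding, one side $X$ may carry a surplus $r<s$ of vertices, and these must be placed in classes based on the other side. If one side is very small (e.g.\ $|B|<s$), treat it separately as in the star example: then $B$ has fewer than $s$ vertices. In that regime, pick for each $b\in B$ a class of $A$-vertices that avoids $N(b)$ and insert $b$ into it. This works because each $b$ has at most $\Delta$ neighbours, while the $A$-classes have total size roughly $k s\gg\Delta$ once $\zeta$ is large. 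The constant $41/2$ should come from the counting here: a vertex $v$ of degree up to $\Delta$ must still find $\Omega(1)$ classes on the opposite side avoiding $N(v)$. There are about $k_{\mathrm{opp}}$ such classes, and $N(v)$ can meet at most $\Delta$ of them.

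\textbf{Step 2: repair.} Place the surplus vertices of $X$ into classes on the opposite side $Y$. A surplus vertex $x\in X$ may join a $Y$-class $C$ only if $C\cap N(x)=\emptyset$. The number of $Y$-classes hit by $N(x)$ is at most $\deg(x)\le\Delta$. So we need the number of $Y$-classes to exceed $\Delta$ by a margin. Otherwise we swap: move a non-neighbour $y\in C$ of some other vertex into another class, in Hajnal--Szemerédi/Kierstead--Kostochka style. We use an auxiliary digraph on classes, with an arc $C\to D$ whenever some $v\in C$ has no neighbour in $D$, and find an accessible path from an overfull class to an underfull one. Moving vertices along this path fixes sizes while keeping each class independent.

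\textbf{Main obstacle.} The key claim is that the set $\mathcal{R}$ of classes reachable backward from an underfull class is large. Count edges between a non-reachable class and $\bigcup\mathcal{R}$. Every vertex of a non-reachable class has a neighbour in each class of $\mathcal{R}$, so it has at least $|\mathcal{R}|$ such neighbours. On the other hand, the number of these edges is at most $\Delta\,|\bigcup\mathcal{R}|\approx\Delta s|\mathcal{R}|$. Bipartiteness should improve this by a factor of about $2$, since only opposite-side classes matter. This is exactly what lets us use $\Delta/2$ colors instead of $\Delta$. I expect the hardest part to be handling vertices of $A$ that see many $B$-classes when $|B|$ is moderately large, and tuning the inequalities so they close precisely when $\zeta>41/2$ and $\Delta\ge c(\zeta)$. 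The $O(1)$ error terms coming from $\lceil\cdot\rceil$ are absorbed by taking $c$ large.

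\textbf{Running time.} Each repair move runs a BFS on the class digraph, which costs $O(n)$ up to degree factors. There are $O(n)$ moves, so the total running time is $O(n^2)$.
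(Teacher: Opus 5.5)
\paragraph{There is a genuine gap, at the step that carries the proof.} You insert a vertex into a class that was fixed in advance, and your count cannot justify this. There are only $k=\lceil\Delta/2\rceil+1<\Delta$ classes in total, so a vertex of degree $\Delta$ may meet every class on the opposite side, and then no class avoids its neighbourhood.

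\begin{itemize}
\item Your own requirement that ``the number of $Y$-classes exceed $\Delta$'' therefore never holds.
\item The small-side case of Step~1 fails for the same reason. The total size $ks\gg\Delta$ is irrelevant once the classes are fixed.
\item Everything is then pushed onto the Hajnal--Szemer\'edi/Kierstead--Kostochka accessibility argument, and you do not prove its key claim. In your edge count, each of the roughly $s(k-|\mathcal R|)$ vertices outside $\bigcup\mathcal R$ has at least $|\mathcal R|$ neighbours inside, while there are at most $\Delta s|\mathcal R|$ such edges. This only gives $k-|\mathcal R|\le\Delta$, which is vacuous because $k<\Delta$.
\item The promised ``factor of $2$ from bipartiteness'' is asserted, not derived. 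The standard argument uses $k\ge\Delta+1$ precisely at this point.
\item You also leave open which side carries the rounding surplus. If it is the larger side, it may have to be pushed into classes of a very small side.
\end{itemize}

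\paragraph{The missing idea.} Both sides are independent sets, so you never need to fix the classes of the larger side $Y$ before placing the awkward vertices, and no swapping is needed. Let $q=\lfloor n/k\rfloor$ and $r=n-kq$. The paper proceeds as follows.

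\begin{enumerate}
\item It covers all but $M<q+k$ vertices of the smaller side by pure classes. A normalization lemma chooses these so that exactly $r$ classes of size $q+1$ can still be achieved, with the larger side absorbing the rounding.
\item It spreads the $M$ leftover vertices over $t=\lfloor k/4\rfloor$ new classes, with at most $H:=\min\{q,\lfloor(|Y|-t(q+1))/(\Delta-1)\rfloor\}$ in each.
\item It completes each new class greedily with unused vertices of $Y$ lying outside the joint neighbourhood of its small-side vertices, which has size at most $H\Delta$. The choice of $H$ guarantees enough such vertices, and $|Y|\ge n/2$ is what makes $H$ large.
\item The rest of $Y$ is split arbitrarily into pure classes.
\end{enumerate}

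The only substantive inequality is $tH\ge M$. It reduces to $n(k-36)\ge 41k^2-36k$, and via $n\ge\zeta(2k-3)$ this is exactly where $\zeta>41/2$ comes from, not from counting classes that avoid $N(v)$.
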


\Cref{thm:main} complements the results of Bollobás and Guy~\cite{bollobas1983equitable} and Lih and Wu~\cite{lih1996equitable} by showing that, under a linear lower bound on the order, every bipartite graph satisfies an upper bound of the form
$\chi_e(G)\le \left\lceil \Delta(G)/2\right\rceil+1.$
Moreover, our result yields an $O(|V(G)|^2)$-time algorithm for constructing such a coloring.

\smallskip
\noindent
\subsection*{Note} We emphasize that the extremal example $K_{1,\Delta}$ does not satisfy the
large-order hypothesis in Theorem~\ref{thm:main}. Thus our result should not
be interpreted as sharp for the class of bipartite graphs with
$|V(G)|\ge \zeta\Delta(G)$. Rather, the star shows that the term
$\Delta/2$ is unavoidable for any statement covering all bipartite graphs
and depending only on $\Delta$. It remains an interesting open problem to
determine whether, for fixed $\zeta$, the equitable chromatic number of
bipartite graphs satisfying $|V(G)|\ge \zeta\Delta(G)$ can still grow
linearly in $\Delta$, or whether a substantially smaller bound is possible.

\medskip

From now on, unless stated otherwise, we assume that $G$ is a bipartite graph with bipartition $(A,B)$ chosen so that $\mathfrak{a}:=|A|\le |B|=:\mathfrak{b}$. We also write $\Delta:=\Delta(G)$, set $k:=\lceil \Delta/2\rceil+1$, and write $n:=|V(G)|=kq+r$ with $0\le r<k$.

\subsection*{Proof outline}
We briefly explain the main idea of the proof of~\Cref{thm:main}.
Since $n=kq+r$, an equitable $k$-coloring of $G$ is equivalent to a partition of $V(G)$ into $k$ independent sets, exactly $r$ of size $q+1$ and the remaining $k-r$ of size $q$.
The proof begins by writing $\mathfrak a=|A|$ in the form $\mathfrak a=xq+u+M$, where $x$ and $u$ are chosen so that $xq+u$ vertices of $A$ can already be covered by $x$ classes contained in $A$, exactly $u$ of size $q+1$, while the remainder $M$ is small. The existence of such a representation is given by Lemma~\ref{lem:normalization}.
One then applies Lemma~\ref{lem:construction}. First, the set of size $xq+u$ inside $A$ is covered by classes contained in $A$. The remaining $M$ vertices of $A$ are distributed among $t=\lfloor k/4\rfloor$ further classes, each of which is completed using vertices of $B$ so as to remain independent; these classes are then rebalanced to obtain the required numbers of classes of sizes $q$ and $q+1$. Finally, all uncovered vertices lie in $B$, and since $B$ is independent they can be partitioned directly into the remaining color classes.
Thus the proof of~\Cref{thm:main} reduces to verifying that, under the hypothesis $|V(G)|\ge \zeta\Delta(G)$, the conditions required in Lemma~\ref{lem:construction} are satisfied.

Let us also indicate the role of the choice $t=\lfloor k/4\rfloor$. The parameter $t$ is the number of further classes used to
deal with the remaining $M$ vertices of $A$. Taking $t$ larger gives more
classes among which these vertices can be distributed, but it also requires
more vertices of $B$ to complete those classes and leaves fewer classes for
the final partition of the uncovered part of $B$. Thus $t$ must balance
these two competing requirements. The choice $t=\lfloor k/4\rfloor$ is large enough to handle the
remaining $M$ vertices of $A$, while still leaving enough vertices and
enough classes for the final step. The numerical constant in
\Cref{thm:main} comes from making this balancing argument precise, including
floor effects and the estimate $M<q+k$; it is not intended to represent a
genuine extremal threshold.

\section{Partitioning Lemmas}

We begin with some definitions and an observation (whose proof is easy and we omit). 
Let $X\subseteq V(G)$ and $q\geq 1$. A \emph{$(q,q+1)$-cover} of $X$ is a family of pairwise disjoint independent sets whose union is $X$ and each of whose members has size either $q$ or $q+1$.
An \emph{$A$-pure class} is an independent set contained in $A$, a \emph{$B$-pure class} is an independent set contained in $B$, and a \emph{mixed class} is an independent set intersecting both $A$ and $B$.

\begin{observation}\label{lem:split}
Let \(M,H\) be nonnegative integers, and let \(t\) be a positive integer. If \(M\le tH\), then there exist integers \(s_1,\dots,s_t\) such that
$\sum_{i=1}^t s_i=M$, $ 0\le s_i\le H$ for all $i\in [t]$,
and moreover \(|s_i-s_j|\le 1\) for all \(i,j\in [t]\).
\end{observation}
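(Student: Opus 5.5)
The plan is the standard division-with-remainder construction. Since $t\ge 1$, write $M=tp+\rho$ with $p=\lfloor M/t\rfloor$ and $0\le \rho<t$, and set $s_i=p+1$ for $i\in[\rho]$ and $s_i=p$ for $\rho<i\le t$.

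The verification then has three short steps.

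\emph{The sum.} We have $\sum_{i=1}^t s_i=\rho(p+1)+(t-\rho)p=tp+\rho=M$.

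\emph{Balance.} Every $s_i$ equals either $p$ or $p+1$, so $|s_i-s_j|\le 1$ for all $i,j\in[t]$.

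\emph{Bounds.} Since $M\ge 0$ and $t\ge 1$, we get $p\ge 0$, hence every $s_i\ge 0$. For the upper bound, $M\le tH$ gives $p=\lfloor M/t\rfloor\le H$. If $\rho=0$, all $s_i=p\le H$ and we are done. If $\rho>0$, then $tp<M\le tH$, so $p<H$. Since $p$ and $H$ are integers, this gives $p+1\le H$, and so again every $s_i\le H$.

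The only point needing care is this upper bound $p+1\le H$ when $\rho>0$. It uses the strict inequality $tp<M$ together with the integrality of $p$ and $H$. Everything else is immediate.
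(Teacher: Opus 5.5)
Your proof is correct and takes the same approach as the paper. The paper omits the proof of this observation, but in the proof of Corollary~\ref{cor:algorithmic} it builds the $s_i$ exactly this way, writing $M=tQ+R$ with $0\le R<t$ and taking $s_i=Q+1$ for $i\le R$ and $s_i=Q$ otherwise; your integrality argument that $p+1\le H$ when $\rho>0$ supplies the one detail the paper leaves implicit.
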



The next lemma gives a three-step covering procedure, which is at the heart of the proof of~\Cref{thm:main}. 

\begin{lemma}\label{lem:construction}
Assume $\Delta\ge 2$, $q\ge 1$, and $0\leq r<k$. Let $t\in \mathbb{N}\cup \{0\}$ and $x,u$ be nonnegative integers. Let $S\subseteq A$ satisfy $|S|=xq+u$, where $0\le u\le x$ and $u\le r$. Put $M:=\mathfrak{a}-|S|$ and let
\[
H:=\min\left\{q,\left\lfloor \frac{\mathfrak{b}-t(q+1)}{\Delta-1}\right\rfloor\right\}.
\]
Suppose that $r-u\le k-x$, $t\le k-x$, $H\ge 0$, and $tH\ge M$.
Then there exists a $(q,q+1)$-cover $\mathcal C$ of $V(G)$ consisting of $k$ pairwise disjoint independent sets such that exactly $r$ members of $\mathcal C$ have size $q+1$.
\end{lemma}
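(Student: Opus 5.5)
The plan is to build the cover in three stages, following the outline of the paper. First, we partition $S$ into $x$ $A$-pure classes, exactly $u$ of size $q+1$ and $x-u$ of size $q$. This is possible because $|S|=xq+u$ and $0\le u\le x$, and every subset of $A$ is independent. Let $R:=A\setminus S$, so $|R|=M$.

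Second, we create $t$ mixed (or possibly $B$-pure) classes absorbing $R$. Since $tH\ge M$ and $H\ge 0$, Observation~\ref{lem:split} gives integers $0\le s_i\le H$ with $\sum_i s_i=M$. We split $R$ into parts $R_1,\dots,R_t$ with $|R_i|=s_i$ (if $t=0$ then $M=0$ and this step is vacuous).

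Before filling these classes, we fix their target sizes. Let $w:=\max\{0,(r-u)-(k-x-t)\}$. The hypothesis $r-u\le k-x$ gives $w\le t$, and $u\le r$ gives $w\ge 0$. Exactly $w$ of the $t$ classes get target size $q+1$, and the others get target size $q$.

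We then greedily complete each $R_i$, for $i=1,\dots,t$, by adding vertices of $B$ that are not in $N(R_i)$ and not used by earlier classes. Each class $R_i\cup B_i$ is then independent, since $A$ and $B$ are independent sets and $B_i\cap N(R_i)=\emptyset$. The key counting step, which I expect to be the main (though short) obstacle, is checking that enough vertices remain. At step $i$ we have $|N(R_i)|\le s_i\Delta$, and at most $(i-1)(q+1)$ vertices of $B$ are already used. Hence the number of available vertices is at least
\[
\mathfrak b-s_i\Delta-(i-1)(q+1).
\]
From $s_i\le H\le (\mathfrak b-t(q+1))/(\Delta-1)$ we get $s_i(\Delta-1)\le \mathfrak b-t(q+1)$. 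Therefore the available count is at least
\[
t(q+1)-s_i-(i-1)(q+1)\ge q+1-s_i,
\]
which covers the requirement of $q+1-s_i$ or $q-s_i$ additional vertices. Moreover $s_i\le H\le q$, so these requirements are nonnegative.

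Third, all vertices not yet covered lie in $B$, so any partition of them gives independent sets. Their number is
\[
n-(xq+u)-(tq+w)=(k-x-t)q+(r-u-w).
\]
By the choice of $w$ we have $0\le r-u-w\le k-x-t$, and $k-x-t\ge 0$ because $t\le k-x$. So we can split these leftover vertices into $k-x-t$ $B$-pure classes, exactly $r-u-w$ of size $q+1$ and the rest of size $q$. When $k-x-t=0$ the leftover count is $0$, so this is consistent.

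Altogether this gives $x+t+(k-x-t)=k$ pairwise disjoint independent classes covering $V(G)$. The number of classes of size $q+1$ is $u+w+(r-u-w)=r$, as required.
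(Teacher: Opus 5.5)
Your proof is correct and is essentially the paper's argument: the paper first fills all $t$ mixed classes to size $q+1$ and then deletes one $B$-vertex from $t-e$ of them, where $e=\max\{0,r-u-(k-x-t)\}$ is exactly your $w$, while you fix these target sizes before filling, which is only a cosmetic difference. One tiny slip: $w\ge 0$ holds by the definition of the maximum, and the hypothesis $u\le r$ is what you actually need for $r-u-w\ge 0$ in the case $w=0$.
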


\begin{proof}
We construct the desired family $\mathcal C$ in three steps (see~\Cref{fig:chair-bull-hedg}).
First, we start by covering $S$. Since $|S|=xq+u=(x-u)q+u(q+1)$ and $S\subseteq A$, the set $S$ admits a $(q,q+1)$-cover $\mathcal C_A$ by $x$ $A$-pure classes, exactly $u$ of size $q+1$ and the remaining $x-u$ of size $q$.
If $t=0$, then $tH\ge M$ implies $M=0$, so $A\setminus S=\emptyset$. In this case let $\mathcal C_M:=\emptyset$. Set $y:=k-x-t$. Since
$\mathfrak a=|S|=xq+u,$
we have
$\mathfrak b=n-\mathfrak a=(kq+r)-(xq+u)=(k-x)q+(r-u)=yq+(r-u).$
Also $0\le r-u\le k-x=y$. Hence $B$ admits a $(q,q+1)$-cover $\mathcal C_B$ by $y$ $B$-pure classes, exactly $r-u$ of size $q+1$ and the remaining $y-r+u$ of size $q$.
Then $\mathcal C:=\mathcal C_A\cup \mathcal C_M\cup \mathcal C_B$ is a family of $x+y=k$ pairwise disjoint independent sets covering $V(G)$. Moreover, exactly $u+(r-u)=r$ members of $\mathcal C$ have size $q+1$. Thus $\mathcal C$ is the required $(q,q+1)$-cover of $V(G)$.
Henceforth, we may assume that $t\ge 1$.

We cover $A\setminus S$ as follows. Since $H\ge 0$ and $tH\ge M$, Observation~\ref{lem:split} gives integers $s_1,\dots,s_t$ such that $0\le s_i\le H$ for all $i\in [t]$ and $\sum_{i=1}^t s_i=M$. As $H\le q$, we have $s_i\le q$, and hence $q+1-s_i\ge 1$ for every $i\in [t]$.
Write $R:=A\setminus S$, so $|R|=M$. We now construct pairwise disjoint independent sets $C_1,\dots,C_t$, each of size $q+1$, such that $C_i=R_i\cup T_i$, where $R_i\subseteq R$, $T_i\subseteq B$, and $|R_i|=s_i$. Suppose that $C_1,\dots,C_{i-1}$ have already been chosen. Since $\sum_{j=1}^t s_j=M$, the number of vertices of $R$ not yet used is $M-\sum_{j=1}^{i-1}s_j=\sum_{j=i}^t s_j\ge s_i$, so we may choose $R_i\subseteq R$ with $|R_i|=s_i$.

At this stage, at most $(i-1)(q+1)$ vertices of $B$ have been used. Also every vertex of $R_i$ has degree at most $\Delta$, so $|N(R_i)|\le s_i\Delta$. Hence the number of unused vertices of $B$ with no neighbor in $R_i$ is at least $\mathfrak{b}-(i-1)(q+1)-s_i\Delta$. It therefore suffices to show that $\mathfrak{b}-(i-1)(q+1)-s_i\Delta\ge q+1-s_i$. But since $i\le t$, $s_i\le H$, and $H\le \left\lfloor \frac{\mathfrak{b}-t(q+1)}{\Delta-1}\right\rfloor$, we have
$
\mathfrak{b}-(i-1)(q+1)-s_i\Delta-(q+1-s_i)\ge
\mathfrak{b}-t(q+1)-H(\Delta-1)\ge 0.
$
Thus we may choose $T_i\subseteq B$ of size $q+1-s_i$ consisting of unused vertices anticomplete to $R_i$. Since $R_i\subseteq A$, $T_i\subseteq B$, and $T_i$ is anticomplete to $R_i$, the set
$
C_i:=R_i\cup T_i
$
is independent. Moreover, $C_i$ intersects $B$, and it is mixed whenever $s_i>0$ (possibly $B$-pure when $s_i=0$). Let
$
\mathcal C_M:=\{C_1,\dots,C_t\}.
$
The classes in $\mathcal C_M$ all initially have size $q+1$, and we now rebalance them.

Let $y:=k-x-t$. Since $t\le k-x$, we have $y\ge 0$. Also $0\le r-u\le k-x=y+t$. Choose
$e:=\max\{0,r-u-y\}.$
Indeed, the final $y$ classes contained in $B$ can account for at most $y$ classes of size $q+1$, so among the $t$ classes in $\mathcal C_M$ at least $r-u-y$ must remain of size $q+1$. Thus $e$ is chosen as the smallest feasible value. Then $0\le e\le t$ and $0\le r-u-e\le y$.
Keep exactly $e$ members of $\mathcal C_M$ unchanged, and from each of the remaining $t-e$ members delete one vertex of $B$. This preserves independence, because each class $C_i\in\mathcal C_M$ is already an independent set. After this step, the resulting family, which we still denote by $\mathcal C_M$, consists of $t$ pairwise disjoint independent sets, exactly $e$ of size $q+1$ and the remaining $t-e$ of size $q$.

Finally we cover the remaining vertices of $B$. Since $n=kq+r$ and $\mathfrak{a}=|S|+|R|=xq+u+M$, we have $\mathfrak{b}=(k-x)q+(r-u)-M$. The family $\mathcal C_M$ uses $tq+e-M$ vertices of $B$ (because before shrinking it used $\sum_{i=1}^t(q+1-s_i)=t(q+1)-M$ vertices of $B$, and we deleted one vertex from exactly $t-e$ of its members). Hence the number of vertices of $B$ still uncovered is
$\mathfrak{b}-(tq+e-M)=(k-x-t)q+(r-u-e)=yq+(r-u-e)$.
Since $0\le r-u-e\le y$, this can be written as $(r-u-e)(q+1)+(y-r+u+e)q$. As $B$ is independent, the remaining vertices of $B$ admit a $(q,q+1)$-cover $\mathcal C_B$ by $y$ $B$-pure classes, exactly $r-u-e$ of size $q+1$ and the remaining $y-r+u+e$ of size $q$.

Now let $\mathcal C:=\mathcal C_A\cup \mathcal C_M\cup \mathcal C_B$. Then $\mathcal C$ is a family of pairwise disjoint independent sets covering $V(G)$, and it has size $x+t+y=k$. Moreover, exactly $u+e+(r-u-e)=r$ members of $\mathcal C$ have size $q+1$, while every other member has size $q$. Thus $\mathcal C$ is the required $(q,q+1)$-cover of $V(G)$. This proves~\ref{lem:construction}.
\end{proof}

\begin{figure}[t]
    \centering
    \includegraphics[width=1\textwidth]{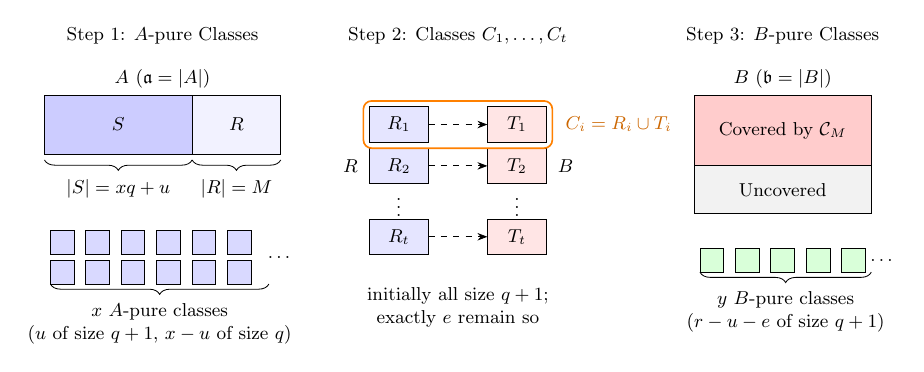}
\caption{The three-step covering procedure used in the proof of \Cref{lem:construction}.}
    \label{fig:chair-bull-hedg}
\end{figure}

For fixed integers $q,k,r$, we say that an integer $\beta$ \emph{admits a normalized form} if there exist integers $x,u,M$ such that
$\beta=xq+u+M$ where
\begin{itemize}
    \setlength\itemsep{0em}
    \item $0\le u\le x\le \lfloor k/2\rfloor$,
    \item $u\le r$, $r-u\le k-x$, and
    \item $0\le M<q+k$.
\end{itemize}

We next show that the parameters in Lemma~\ref{lem:construction} can always be chosen in normalized form.

\begin{lemma}\label{lem:normalization}
Let $q\ge 1$. Then $\mathfrak{a}$ admits a normalized form.
\end{lemma}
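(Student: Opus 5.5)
The plan is to fix $x$ first and then choose $u$ as small as the constraints allow. Write $K:=\lfloor k/2\rfloor$ and set $x:=\min\{\lfloor \mathfrak a/q\rfloor, K\}$ and $\rho:=\mathfrak a-xq\ge 0$. The constraints on $u$ are $u\le x$, $u\le r$ and $u\ge r-k+x$. So I take the smallest admissible value $L:=\max\{0,\,r-k+x\}$. We have $L\le\min\{x,r\}$, because $r-k+x\le x$ (as $r<k$) and $r-k+x\le r$ (as $x\le K\le k$). Putting $u:=L$ makes $r-u\le k-x$ hold, with equality when $L>0$. It remains to get $0\le M:=\rho-u<q+k$.

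\emph{Case $\rho\ge L$.} Take $u=L$ and $M=\rho-L\ge 0$. If $x=\lfloor\mathfrak a/q\rfloor$, then $M\le\rho<q$. Otherwise $x=K<\lfloor\mathfrak a/q\rfloor$, and we use $\mathfrak a\le\mathfrak b$, which gives $\mathfrak a\le n/2=(kq+r)/2$. Hence
\[
M\le\rho=\mathfrak a-Kq\le\frac{(k-2K)q+r}{2}\le\frac{q+r}{2}<q+k .
\]
This is the only place where the assumption $\mathfrak a\le\mathfrak b$ is used, and it is exactly what lets $x$ be capped at $\lfloor k/2\rfloor$.

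\emph{Case $\rho<L$.} Here $L>0$, so $L=r-k+x$. I will lower $x$ step by step. Replacing $(x,u)$ by $(x-j,\,L-j)$ keeps $r-u=k-x$ for every $j$. Meanwhile the remainder becomes $\rho+jq$, so the quantity $M=\rho+jq-(L-j)=\rho-L+j(q+1)$ increases by $q+1$ per step. Let $j\ge 1$ be the least integer with $\rho+j(q+1)\ge L$. Then $0\le M<q+1\le q+k$ by minimality of $j$. Since $\rho\ge 0$ we get $j\le \lceil L/(q+1)\rceil\le L$, so $u=L-j\ge 0$. Also $x-j\ge L-j=u$ because $L\le x$, and $u\le L\le r$. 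Finally $x-j\le K$ trivially.

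Collecting the two cases, $\mathfrak a=xq+u+M$ is in normalized form. The main thing to get right is the interplay between the cap $x\le\lfloor k/2\rfloor$ and the bound $M<q+k$ in the first case, which rests on $\mathfrak a\le n/2$. The decrement argument in the second case is routine bookkeeping once one notices that $r-u-(k-x)$ is invariant under $(x,u)\mapsto(x-1,u-1)$.
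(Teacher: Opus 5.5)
Your proposal is correct and matches the paper's proof step for step. You use the same $x_0=\min\{\lfloor\mathfrak a/q\rfloor,\lfloor k/2\rfloor\}$, the same minimal $u=\max\{0,x_0+r-k\}$, and the same use of $\mathfrak a\le n/2$ to get $M\le (q+r)/2<q+k$ when $x_0$ is capped. In the second case your least $j$ is exactly the paper's $d=\lceil(\ell_0-m_0)/(q+1)\rceil$; the only difference is that you describe it as repeated decrements $(x,u)\mapsto(x-1,u-1)$, whereas the paper writes $d$ down directly.
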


\begin{proof}
Choose
\[
x_0:=\min\left\{\left\lfloor \frac{\mathfrak{a}}{q}\right\rfloor,\left\lfloor \frac{k}{2}\right\rfloor\right\}
\quad\text{and}\quad
m_0:=\mathfrak{a}-x_0q.
\]
Thus $x_0$ is the largest integer satisfying $x_0q\le \mathfrak a$ and $x_0\le \lfloor k/2\rfloor$, and $m_0$ is the corresponding residual term. Let
\[
\ell_0:=\max\{0,x_0+r-k\}.
\]
Then $\ell_0$ is the smallest nonnegative integer $u$ such that $r-u\le k-x_0$ holds.
By construction, $0\le \ell_0\le x_0\le \left\lfloor \frac{k}{2}\right\rfloor$. We also have $\ell_0\le r$, since $\ell_0=\max\{0,x_0+r-k\}$ and $x_0\le k$, so $x_0+r-k\le r$.
We first note that $0\le m_0<q+k$. If $x_0=\lfloor \mathfrak{a}/q\rfloor$, then this is immediate. Otherwise $x_0=\lfloor k/2\rfloor$, and since $\mathfrak{a} \le n/2=(kq+r)/2$, we get $m_0=\mathfrak{a}-\left\lfloor \frac{k}{2}\right\rfloor q\le \frac{kq+r}{2}-\left\lfloor \frac{k}{2}\right\rfloor q\le \frac{q+r}{2}<q+k$.

If $m_0\ge \ell_0$, we take $x:=x_0$, $u:=\ell_0$, and $M:=m_0-\ell_0$. Then $\mathfrak{a}=xq+u+M$, and the inequalities $0\le u\le x\le \left\lfloor \frac{k}{2}\right\rfloor$, $u\le r$, and $0\le M<q+k$ are immediate. Moreover, by the choice of $\ell_0$, we also have $r-u\le k-x$. This gives the required form.

We may therefore assume that $m_0<\ell_0$. Choose
\[
d:=\left\lceil \frac{\ell_0-m_0}{q+1}\right\rceil,
\]
and set
\[
x:=x_0-d,\qquad u:=\ell_0-d,\qquad M:=m_0-\ell_0+d(q+1).
\]
Since $0<\ell_0-m_0\le \ell_0$, we have $1\le d\le \ell_0\le x_0$. Hence
$
0\le u\le x\le \left\lfloor \frac{k}{2}\right\rfloor,
$
and also $u\le \ell_0\le r$. Moreover,
$
\mathfrak{a}=(x_0-d)q+(\ell_0-d)+\bigl(m_0-\ell_0+d(q+1)\bigr)=xq+u+M.
$
Since here $\ell_0>0$, we have $\ell_0=x_0+r-k$, and therefore
$
r-u=r-(\ell_0-d)=k-x_0+d=k-x.
$

Finally, the definition of $d$ gives $(d-1)(q+1)<\ell_0-m_0\le d(q+1)$, so $0\le d(q+1)-(\ell_0-m_0)<q+1$. Since $M=d(q+1)-(\ell_0-m_0)$, it follows that $0\le M<q+1 \leq q+k$.

It follows in all cases that $\mathfrak{a}$ can be written in the required form. This proves~\ref{lem:normalization}.
\end{proof}

\section{The upper bound}

We are now ready to prove our main result.

\begin{proof}[Proof of \Cref{thm:main}]
Fix $\zeta>\frac{41}{2}$. Consider the quadratic polynomials\footnote{Indeed, $f_1$ and $f_2$ have positive leading coefficients if and only if $\zeta>\frac{41}{2}$, which explains the threshold appearing in the statement.}
\[
f_1(k):=\zeta(2k-3)(k-8)-(5k^2-4k)
\quad\text{and}\quad
f_2(k):=\zeta(2k-3)(k-36)-(41k^2-36k).
\]
Now, since both have positive leading coefficient, there exists an integer $K_0=K_0(\zeta)$ such that for every integer $k\ge K_0$ we have
\[
\zeta(2k-3)(k-8)\ge 5k^2-4k
\quad\text{and}\quad
\zeta(2k-3)(k-36)\ge 41k^2-36k.
\]
Set
\[
K:=\max\{K_0,37\}
\qquad\text{and}\qquad
c_{\ref{thm:main}}:=2K.
\]
The lower bound $K\ge 37$ will be used below to ensure that $k-8>0$, $k-36>0$, and $t=\lfloor k/4\rfloor\ge 1$.

Assume that $\Delta\ge c_{\ref{thm:main}}$ and $n\ge \zeta\Delta$. Recall that
$
k=\left\lceil \frac{\Delta}{2}\right\rceil+1,
n=kq+r
$
with $0\le r<k$. Since $\Delta\ge 2K$, we have $k\ge K$. Also, 
it follows that
$
\Delta\ge 2\left\lceil \frac{\Delta}{2}\right\rceil-1=2k-3.
$
Moreover, since $k\le \Delta$ and $n\ge \zeta\Delta>\Delta$, we have $q\ge 1$.
Lemma~\ref{lem:normalization} applies and $\mathfrak{a}$ admits a normalized form. Fix such a representation
$
\mathfrak{a}=xq+u+M,
$
where
$
0\le u\le x\le \left\lfloor \frac{k}{2}\right\rfloor,
u\le r,
r-u\le k-x,$ and $
0\le M<q+k.
$
Choose any set $S\subseteq A$ of size $xq+u$, and let
\[
t:=\left\lfloor \frac{k}{4}\right\rfloor,\qquad
L:=\left\lfloor \frac{\mathfrak{b}-t(q+1)}{\Delta-1}\right\rfloor,\qquad
H:=\min\{q,L\}.
\]
Since $x\le \lfloor k/2\rfloor$, we have
$
t\le \frac{k}{4}\le k-\left\lfloor \frac{k}{2}\right\rfloor\le k-x.$
Together with the normalized form, this gives both
$
r-u\le k-x$ and $
t\le k-x.
$
Thus, in order to apply Lemma~\ref{lem:construction}, it remains only to verify that $H\ge 0$ and $tH\ge M$.

Since $k\ge K\ge 37$, we have
$
t=\left\lfloor \frac{k}{4}\right\rfloor\ge 1.
$
As $H=\min\{q,L\}$, it is enough to prove that $L\ge 0$, $tq\ge M$, and $tL\ge M$.

We claim first that $tq\ge M$. Since
$
q=\left\lfloor \frac{n}{k}\right\rfloor\ge \frac{n}{k}-1$
 and
$t=\left\lfloor \frac{k}{4}\right\rfloor\ge \frac{k}{4}-1,
$
we have
\[
tq\ge \left(\frac{k}{4}-1\right)\left(\frac{n}{k}-1\right)
=\frac{(k-4)(n-k)}{4k}.
\]
On the other hand,
$
M<q+k\le \frac{n}{k}+k.
$
Therefore it is enough to show that
$$
\frac{(k-4)(n-k)}{4k}\ge \frac{n}{k}+k,
$$
or equivalently,
$
n(k-8)\ge 5k^2-4k.
$
Since $\Delta\ge 2k-3$, we have
$
n\ge \zeta\Delta\ge \zeta(2k-3).
$
Also since $k\ge K\ge 37$, it follows that
$
n(k-8)\ge \zeta(2k-3)(k-8).
$
By the choice of $K$, this yields
$
n(k-8)\ge \zeta(2k-3)(k-8)\ge 5k^2-4k.
$
Thus $tq\ge M$.

Next we claim that $tL\ge M$. Note that $\mathfrak{b}\ge n/2$, while
$
t\le \frac{k}{4}$ and
$q+1\le \frac{n}{k}+1.
$
Hence
$
t(q+1)\le \frac{k}{4}\left(\frac{n}{k}+1\right)=\frac{n+k}{4},
$
and so
$
\mathfrak{b}-t(q+1)\ge \frac{n-k}{4}.
$
Consequently,
\[
L\ge \frac{n-k}{4(\Delta-1)}-1\ge \frac{n-9k}{8k},
\]
as $\Delta-1\le 2k$. 

We now note that this lower bound is positive. Indeed, since $k\ge K\ge 37$ and $\zeta>\frac{41}{2}$, we have
$
n\ge \zeta\Delta\ge \zeta(2k-3)>\frac{41}{2}(2k-3)=41k-\frac{123}{2}>9k.
$
Thus
$
\frac{n-9k}{8k}>0,
$
and so in particular $L\ge 0$ and therefore $H\geq 0$. Since also
$
t\ge \frac{k}{4}-1\ge 0,
$
we may multiply the two lower bounds to obtain
\[
tL\ge \left(\frac{k}{4}-1\right)\frac{n-9k}{8k}
=\frac{(k-4)(n-9k)}{32k}.
\]
As above, since
$
M<q+k\le \frac{n}{k}+k,
$
it is enough to prove that
$$
\frac{(k-4)(n-9k)}{32k}\ge \frac{n}{k}+k,
$$
or equivalently,
$
n(k-36)\ge 41k^2-36k.
$
Again, $\Delta\ge 2k-3$ implies
$
n\ge \zeta\Delta\ge \zeta(2k-3)$.
Since $k\ge K\ge 37$, it follows that
$
n(k-36)\ge \zeta(2k-3)(k-36).
$
By the choice of $K$, this gives
$
n(k-36)\ge \zeta(2k-3)(k-36)\ge 41k^2-36k.
$
Thus $tL\ge M$.

We have shown that $H\ge 0$ and $tH\ge M$, so Lemma~\ref{lem:construction} applies. It yields a $(q,q+1)$-cover $\mathcal C$ of $V(G)$ consisting of $k$ independent sets, exactly $r$ of which have size $q+1$. Since $n=kq+r$, this is an equitable $k$-coloring of $G$. This finishes the proof of~\Cref{thm:main}.
\end{proof}

\begin{corollary}\label{cor:algorithmic}
For every $\zeta>\frac{41}{2}$, there exists $c_{\ref{cor:algorithmic}}=c_{\ref{cor:algorithmic}}(\zeta)\in\mathbb{N}$ such that there is an $O(n^2)$-time algorithm which, given a bipartite graph $G$ with $\Delta:=\Delta(G)\ge c_{\ref{cor:algorithmic}}$ and $n\ge \zeta\Delta$, constructs an equitable $\left(\left\lceil \Delta/2\right\rceil+1\right)$-coloring of $G$.
\end{corollary}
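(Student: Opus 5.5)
We turn the proof of \Cref{thm:main} into an algorithm, taking $c_{\ref{cor:algorithmic}}:=c_{\ref{thm:main}}(\zeta)$, and check that every step runs in $O(n^2)$ time. The input is given by adjacency lists, and $|E(G)|\le n\Delta/2\le n^2$.

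First we compute the bipartition $(A,B)$ of $G$. A BFS $2$-coloring of each component does this in $O(n+|E(G)|)$ time. If the larger side comes out as $A$, we swap the names, so that $\mathfrak a\le\mathfrak b$. Note that the proof of \Cref{thm:main} works for any bipartition with $\mathfrak a\le \mathfrak b$. Next we compute $\Delta$, $k=\lceil\Delta/2\rceil+1$, $q=\lfloor n/k\rfloor$, $r$ and $t=\lfloor k/4\rfloor$ in $O(n+|E(G)|)$ time. The normalized form $\mathfrak a=xq+u+M$ is obtained by evaluating the explicit formulas in the proof of Lemma~\ref{lem:normalization}: $x_0$, $m_0$, $\ell_0$, and, if needed, $d$. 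This takes $O(1)$ arithmetic operations. The proof of \Cref{thm:main} shows that the hypotheses of Lemma~\ref{lem:construction} hold, so we never need to test them. We can still compute $H$ and the integers $s_1,\dots,s_t$ from Observation~\ref{lem:split}: set $s_i=\lfloor M/t\rfloor$ or $\lceil M/t\rceil$, with exactly $M-t\lfloor M/t\rfloor$ of them equal to $\lceil M/t\rceil$. This costs $O(t)$ time.

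Next we follow the three steps in the proof of Lemma~\ref{lem:construction} literally.
\begin{itemize}
\item \emph{Step 1.} Take $S$ to be any $xq+u$ vertices of $A$ and cut them into consecutive blocks: $u$ blocks of size $q+1$ and $x-u$ blocks of size $q$. This takes $O(n)$ time.
\item \emph{Step 2.} This is the only step that uses adjacency. For each $i\in[t]$, choose the next $s_i$ unused vertices of $R=A\setminus S$ as $R_i$. Mark $N(R_i)$ in a Boolean array indexed by $V(G)$, which costs $O(s_i\Delta)$. Then scan $B$ once, taking the first $q+1-s_i$ vertices that are neither used nor marked. The counting argument in the proof of Lemma~\ref{lem:construction} guarantees that such vertices exist. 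Finally, unmark $N(R_i)$. One iteration costs $O(s_i\Delta+\mathfrak b)$, so the whole step costs $O(M\Delta+t n)=O(n\Delta+kn)=O(n^2)$, since $\sum_i s_i=M\le n$ and $t\le k\le\Delta\le n$.
\item \emph{Rebalancing.} Compute $e$, then delete one $B$-vertex from $t-e$ of the classes. This takes $O(t)$ time.
\item \emph{Step 3.} Cut the unused part of $B$ into blocks: $r-u-e$ blocks of size $q+1$ and $y-r+u+e$ blocks of size $q$. This takes $O(n)$ time.
\end{itemize}
Assigning one color to each of the $k$ resulting classes gives, by Lemma~\ref{lem:construction}, an equitable $k$-coloring.

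The only step that needs care is the running-time bound for Step~2. A naive implementation would check whether each candidate vertex of $B$ is anticomplete to $R_i$ by scanning adjacency lists. The marking array avoids this and keeps the cost of round $i$ at $O(s_i\Delta+n)$. Summing over the $t\le n$ rounds gives $O(n^2)$ in total. All the other steps are linear. The algorithm's correctness is exactly the content of Lemmas~\ref{lem:construction} and~\ref{lem:normalization} together with the verification in the proof of \Cref{thm:main}.
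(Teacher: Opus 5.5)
Your proposal is correct and follows the paper's proof essentially step for step: a BFS bipartition, the normalized form computed in $O(1)$ time from the explicit formulas of Lemma~\ref{lem:normalization}, and balanced integers $s_i$ computed in $O(t)$ time. The classes $C_1,\dots,C_t$ are then built greedily by marking $N(R_i)$ and scanning $B$ once per class, for $O(n^2)$ time in total. Your explicit unmarking of $N(R_i)$ after each round is a small point the paper leaves implicit; it is needed so that marks from earlier rounds do not wrongly exclude vertices of $B$.
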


\begin{proof}
Let $c_{\ref{cor:algorithmic}}:=c_{\ref{thm:main}}$. We shall show that the proof of \Cref{thm:main} is constructive throughout. First, a bipartition $(A,B)$ of $G$ can be found in time $O(|V(G)|+|E(G)|)$. The proof of Lemma~\ref{lem:normalization} gives an explicit formula for a normalized form $\mathfrak a=xq+u+M$, obtained from a constant number of arithmetic operations, and hence computable in $O(1)$ time. To construct the integers in Observation~\ref{lem:split}, note that in the proof of \Cref{thm:main} we have $t=\lfloor k/4\rfloor\ge 1$ and $M\le tH$; write $M=tQ+R$ with $0\le R<t$, and define $s_i:=Q+1$ for $1\le i\le R$ and $s_i:=Q$ for $R<i\le t$. Then $\sum_{i=1}^t s_i=M$, $0\le s_i\le H$ for all $i\in [t]$, and $|s_i-s_j|\le 1$ for all $i,j\in [t]$, so these integers are computable in $O(t)\le O(n)$ time.

For Lemma~\ref{lem:construction}, the $A$-pure classes and the final $B$-pure classes are obtained by straightforward partitions of vertex sets. The classes $C_1,\dots,C_t$ are built greedily: for each class one chooses the prescribed number of vertices from $A\setminus S$, marks their neighbors in the current set of unused vertices of $B$, and then selects enough unmarked vertices of $B$. The proof of Lemma~\ref{lem:construction} shows that this greedy choice always succeeds. Since the chosen subsets of $A\setminus S$ are pairwise disjoint, every edge incident with a vertex of $A\setminus S$ is scanned at most once, so the total time spent marking neighbors is $O(|E(G)|)$. The search through unused vertices of $B$ costs at most $O(|B|)$ per class, and since there are $t\le k\le n$ such classes, the total cost of this part is $O(n^2)$. All remaining steps take linear time. Therefore, the total running time is $O(n^2+|E(G)|)=O(n^2)$. Correctness follows from \Cref{lem:construction} and \Cref{lem:normalization}.
\end{proof}
\section*{Acknowledgment}
Our thanks to Ararat Harutyunyan for several helpful discussions.

\bibliographystyle{abbrv}
\bibliography{ref}

@article{lih1996equitable,
  title={On equitable coloring of bipartite graphs},
  author={Lih, Ko-Wei and Wu, Pou-Lin},
  journal={Discrete Mathematics},
  volume={151},
  number={1-3},
  pages={155--160},
  year={1996},
  publisher={Elsevier}
}

@article{chen1994-treeequitable,
  title={Equitable coloring of trees},
  author={Chen, Bor-Liang and Lih, Ko-Wei},
  journal={Journal of Combinatorial Theory, Series B},
  volume={61},
  number={1},
  pages={83--87},
  year={1994},
  publisher={Elsevier}
}

@article{chen1994equitable,
  title={Equitable coloring and the maximum degree},
  author={Chen, Bor-Liang and Lih, Ko-Wei and Wu, Pou-Lin},
  journal={European journal of combinatorics},
  volume={15},
  number={5},
  pages={443--447},
  year={1994},
  publisher={Elsevier}
}

@article{erdos1964problem,
  title={Problem 9, Theory of graphs and its applications (M. Fieldler ed.)},
  author={{Erd\"o}s, P},
  journal={Czech. Acad. Sci. Publ., Prague},
  pages={159--159},
  year={1964}
}

@article{hajnal1970proof,
  title={Proof of a conjecture of {P. Erd\"os}},
  author={Hajnal, Andr{\'a}s and Endre Szemer\'edi},
  journal={Combin. Theory Appl.},
  volume={2},
  pages={601},
  year={1970}
}

@article{bollobas1983equitable,
  title={Equitable and proportional coloring of trees},
  author={Bollob{\'a}s, Bela and Guy, Richard K},
  journal={Journal of Combinatorial Theory, Series B},
  volume={34},
  number={2},
  pages={177--186},
  year={1983},
  publisher={Elsevier}
}

\end{document}